\documentclass[11pt,reqno]{amsart}
\usepackage{amsrefs}
\usepackage{amsfonts, amsmath, amssymb, amscd, amsthm, bm}
\usepackage{cases}
\usepackage{enumerate}

\usepackage{comment}

\usepackage{enumerate}
\usepackage{amsfonts, amsmath, amssymb, amscd, amsthm, bm}

\usepackage{enumerate}
\usepackage{url}
\usepackage[linktocpage=true,colorlinks,citecolor=magenta,linkcolor=blue,urlcolor=magenta]{hyperref}
\usepackage{multicol}
\usepackage{comment}

\newtheorem{thm}{Theorem}[section]

\newtheorem{lem}[thm]{Lemma}

\theoremstyle{definition}

\theoremstyle{theorem}
\newtheorem{rem}[thm]{Remark}

\theoremstyle{claim}

\numberwithin{equation}{section}
\textheight=23cm \textwidth=15cm
\parskip = 0.165cm
\topmargin=-0.1cm \oddsidemargin=0cm \evensidemargin=0cm

\def\F{{\mathbb M}^{N}_c}
\def\H{\mathbb{H}^{N}(-1)}
\def\S{\mathbb{S}^{N}(1)}
\def\R{\mathbb{R}^{N}}
\def\et{\tilde{e}}
\def\th{\tilde{h}}
\def\tH{\tilde{H}}
\def\tg{\tilde{g}}
\def\tbg{\tilde{\bar{g}}}
\def\tom{\tilde{\omega}}
\def\tt{\tilde{\theta}}

\def\tR{\tilde{R}}

\def\la{\lambda}

\def\pl{\Delta_p}
\def\l{\langle}
\def\r{\rangle}

\DeclareMathOperator\vol{vol}
\DeclareMathOperator\dive{div}

\DeclareMathOperator\arcsinh{arcsinh}
\begin{document}

\title{Reilly-type inequalities for $p$-Laplacian on submanifolds in space forms}

\author{Hang Chen}
\address[Hang Chen]{Department of Applied Mathematics, Northwestern Polytechnical University, Xi' an 710129, P. R. China \\ email: chenhang86@nwpu.edu.cn}
\thanks{Chen supported by NSFC Grant No. 11601426}
\author{Guofang Wei}
\address[Guofang Wei]{Department of Mathematics, University of California, Santa Barbara CA 93106 \\ email: wei@math.ucsb.edu}
\thanks{GW partially supported by NSF DMS 1506393}

\begin{abstract}
Let $M$ be an $n$-dimensional closed orientable submanifold in an $N$-dimensional space form. When $1<p \le \frac n2 + 1$, we obtain an upper bound for the first nonzero eigenvalue of the $p$-Laplacian in terms of the mean curvature of $M$ and the curvature of the space form. This generalizes the Reilly inequality for the Laplacian \cite{Rei77, ESI92} to the $p$-Laplacian and extends the work of \cite{DM15} for the $p$-Laplacian.
\end{abstract}

\keywords {Reilly-type inequality, $p$-Laplacian, eigenvalue estimate, submanifolds.}

\subjclass[2000]{58C40, 53C42, 35P15}

\maketitle

\section{Introduction}
Let $M$ be an $n$-dimensional compact Riemannian manifold. The $p$-Laplacian ($p>1$) is a second order quasilinear elliptic operator on $M$ defined by
\begin{equation}
\pl u=\dive (|\nabla u|^{p-2}u).
\end{equation}  
It is the usual Laplacian when $p=2$. Similar with Laplacian, one can consider the eigenvalue problem of $\pl$. A real number $\la$ is called a Dirichlet (or Neumann) eigenvalue if there exists a non-zero function $u$ satisfying the following equation with Dirichlet boundary condition  $u \equiv 0 $ on $\partial M$ (or  Neumann boundary condition $\nabla_{\nu}u \equiv 0$  on $\partial M$):
\begin{equation}
	\pl u=-\la |u|^{p-2} u\quad  \text{on $M$},
\end{equation}
where $\nu$ is the outward normal on $\partial M$.

So far, many estimates for the first eigenvalue of Laplacian have been generalized to $\pl$. For instance, 
 Matei \cite{Mat00} extended Cheng's first Dirichlet eigenvalue comparison of balls \cite{Che75} to the $p$-Laplacian. For closed Riemannian manifolds with Ricci curvature bounded below by $(n-1)K$,  a sharp lower bound for the first nontrivial eigenvalue of $p$-Laplacian was obtained by Matei \cite{Mat00} for $K>0$, Valtora \cite{Val12} for $K=0$ and Naber-Valtora \cite{NV14} for general real number $K$ respectively, see also \cite{And15,CM17}. Recently, Seto-Wei \cite{SW17} gave various estimates of the first eigenvalue of the $p$-Laplacian on closed Riemannian manifolds with integral curvature condition.

Now we consider a Riemannian manifold $M$ without boundary. The first nonzero eigenvalue of $\pl$, denoted by $\la_{1,p}$,  has a Rayleigh type variational characterization (cf. \cite{Ver91}):
\begin{equation}
\la_{1,p}=\inf\left\{ \frac{\int_M |\nabla u|^p}{\int_M|u|^p} \Big| u\in W^{1,p} (M)\backslash \{0\}, \int_M|u|^{p-2} u =0 \right\}.
\end{equation}
When $M$ is a submanifold immersed in $\F$, where $\F$ is the $N$-dimensional simply connected space form of constant sectional curvature $c$ and represents the Euclidean space $\R$, the unit
sphere $\S$ and the hyperbolic space $\H$ for $c=0, 1$ and $-1$ respectively, there is a well-known estimate for the first  nonzero eigenvalue of Laplacian.
\begin{thm}[cf. \cite{Rei77,ESI92}]\label{thm_Rei}
	Let $M$ be an $n$-dimensional  closed orientable submanifold in an $N$-dimensional space form $\F$. Then the first non-zero eigenvalue $\lambda_{1}^{\Delta}$ of Laplacian satisfies
\begin{equation}\label{Rei}
\lambda_1^{\Delta}\leq\frac{n}{\vol(M)}\int_M\big(|\mathbf{H}|^2+c\big),
\end{equation}
where $\mathbf{H}$ is the mean curvature vector of $M$ in $\F$, and $\vol(M)$ is the volume of $M$. Moreover, the equality holds in \eqref{Rei} if and only if $M$ is minimally immersed in a geodesic sphere of radius $r_c$ of $\F$  with $r_0=\big(n\big/\lambda_1^{\Delta}\big)^{1/2}, r_1=\arcsin r_0$ and $r_{-1}=\arcsinh r_0$. 
\end{thm}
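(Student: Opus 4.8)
The plan is to combine the variational characterization with the classical second-order refinement of the Rayleigh quotient. The one spectral fact I need is elementary: if $u\in C^\infty(M)$ satisfies $\int_M u=0$, then expanding $u$ in an $L^2$-orthonormal basis of eigenfunctions of $\Delta$ and using $\lambda_k\ge\lambda_1^{\Delta}$ for every $k\ge1$ gives
\[
\int_M(\Delta u)^2\ge\lambda_1^{\Delta}\int_M|\nabla u|^2 .
\]
This is the only place the eigenvalue enters, and it is insensitive to adding a constant to $u$, since both $\nabla u$ and $\Delta u$ annihilate constants; in particular I may freely replace any coordinate function by its mean-zero part without touching either side.

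Next I would isometrically realize the ambient space form in a flat model: $\F=\R$ requires nothing; $\S\subset\mathbb{R}^{N+1}$ as the unit sphere; and $\H\subset\mathbb{R}^{N,1}$ as the upper unit hyperboloid in Minkowski space. Writing $\phi$ for the resulting position map of $M$ and $x_1,\dots$ for the restricted ambient coordinates, the two geometric inputs are the Beltrami relation $\Delta\phi=n\hat{\mathbf H}$, where $\hat{\mathbf H}$ is the mean curvature vector of $M$ in the model space, and the totally umbilic splitting of the model hypersurface, which gives $\hat{\mathbf H}=\mathbf H\mp\phi$ and hence $|\hat{\mathbf H}|^2=|\mathbf H|^2+c$ after using $\langle\mathbf H,\phi\rangle=0$ (for $c=0$ the correction term is simply absent and $\hat{\mathbf H}=\mathbf H$). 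I would then apply the spectral inequality to each centered coordinate $x_\alpha-\bar x_\alpha$ and sum over $\alpha$: the right-hand side collapses to $\lambda_1^{\Delta}\,n\,\vol(M)$ because $\sum_\alpha|\nabla x_\alpha|^2=n$ for an isometric immersion, while the left-hand side becomes $\sum_\alpha\int_M(\Delta x_\alpha)^2=\int_M|\Delta\phi|^2=n^2\int_M(|\mathbf H|^2+c)$. Dividing yields \eqref{Rei}.

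The step that runs cleanly for $c=0$ and $c=1$ but is the main obstacle for $c=-1$ is precisely the pair of identities $\sum_\alpha(\Delta x_\alpha)^2=|\Delta\phi|^2$ and $\sum_\alpha|\nabla x_\alpha|^2=n$: both involve a positive-definite sum over coordinates, which matches the geometry only in the Euclidean and spherical models. In the hyperboloid model the geometric norms are computed with the indefinite Minkowski inner product, so the positive-coefficient coordinate sums differ from $n$ and from the Lorentzian length of $\Delta\phi$ by the timelike term, and—crucially—the spectral inequality cannot be applied with a negative coefficient to the timelike coordinate $x_{N+1}$. Repairing this is exactly the content of the hyperbolic Reilly inequality of El Soufi--Ilias \cite{ESI92}: one replaces the rigid coordinate functions by a balanced family coming from the conformal (M\"obius) transformations of $\H$, enforces $\int_M u=0$ by a Hersch-type center-of-mass argument, and exploits the conformal invariance of the Willmore-type energy $\int_M(|\mathbf H|^2+c)$. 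This is where I expect the real work to lie.

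Finally, for the rigidity statement I would trace back the two inequalities used. Equality forces each $x_\alpha-\bar x_\alpha$ to be a first eigenfunction, so $\Delta(\phi-\bar\phi)=-\lambda_1^{\Delta}(\phi-\bar\phi)$; combined with $\Delta\phi=n\hat{\mathbf H}$ this identifies $\hat{\mathbf H}$ as a multiple of the centered position vector, which by Takahashi's theorem is equivalent to $M$ being minimally immersed in a geodesic sphere of $\F$. Reading off the radius from the proportionality constant $\lambda_1^{\Delta}$ then gives $r_0=(n/\lambda_1^{\Delta})^{1/2}$ in the Euclidean case and $r_1=\arcsin r_0$, $r_{-1}=\arcsinh r_0$ in the spherical and hyperbolic cases, as claimed.
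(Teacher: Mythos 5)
First, note that the paper never proves Theorem \ref{thm_Rei}: it is quoted from \cite{Rei77} and \cite{ESI92}, and the only proof-like content the paper supplies is the $p=2$ specialization of its Section 3. Your argument for $c=0$ and $c=1$ is correct and is a standard variant of Reilly's original one: the spectral inequality $\int_M(\Delta u)^2\ge\lambda_1^{\Delta}\int_M|\nabla u|^2$, the Beltrami identity $\Delta\phi=n\hat{\mathbf H}$, the umbilicity of $\S\subset\mathbb{R}^{N+1}$ giving $|\hat{\mathbf H}|^2=|\mathbf H|^2+c$, and $\sum_\alpha|\nabla x_\alpha|^2=n$ assemble exactly as you say (Reilly instead pairs $\lambda_1^{\Delta}\int_M u^2\le\int_M|\nabla u|^2$ with a Cauchy--Schwarz step; the two are equivalent in substance). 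You also correctly diagnose why the hyperboloid model fails for $c=-1$: the indefinite signature makes the positive coordinate sums disagree with the Lorentzian norms, and the Rayleigh quotient cannot be applied with a negative weight on the timelike coordinate.

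The gap is that for $c=-1$ you stop at this diagnosis and outsource the repair to \cite{ESI92} in one sentence; as a self-contained proof the hyperbolic case --- the only genuinely hard one --- is missing. Your one-sentence sketch of the repair is also slightly off: $\int_M(|\mathbf H|^2+c)$ is a conformal invariant only for $n=2$, and what the argument actually uses is the pointwise identity \eqref{eq_relation}, whose divergence term integrates away and whose remaining corrections have a sign, yielding $\int_Me^{2\rho}\le\int_M(c+|\mathbf H|^2)$; combined with the balanced test functions of Lemma \ref{lem3.1} and the $p=2$ case of \eqref{eq3.3}, namely $\lambda_1^{\Delta}\vol(M)\le n\int_Me^{2\rho}$, this proves \eqref{Rei} for all three values of $c$ at once. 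So you could have closed your gap simply by specializing the paper's Section 3 to $p=2$. Your rigidity discussion via Takahashi's theorem is the standard one and is fine for the cases you actually prove, but it too needs the conformal setup to cover $c=-1$.
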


 R. Reilly first proved \eqref{Rei} for $c=0$ in  \cite{Rei77}, hence  \eqref{Rei} is usually referred ``Reilly inequality". For the case $c=1$, by embedding the sphere $\S \rightarrow \mathbb R^{N+1}$, one can reduce it to the case $c=0$. In these case, one can move the center of $M$ to the origin and then the  coordinate functions can be used as test functions.  But this does not work for $c=-1$. The case $c=-1$   was proved by El Soufi and Ilias
in \cite{ESI92} by conformally mapping $\H$ to $\S$. 

Recently, F. Du and J. Mao \cite{DM15} extended the eigenvalue estimate to  the $p$-Laplacian for $c=0,1$. 

\begin{thm}[cf. Theorem 1.2 and Theorem 1.5 in \cite{DM15}]
		Let $M$ be an $n$-dimensional  closed orientable submanifold in an $N$-dimensional space form $\F$. For $p>1$, the first non-zero eigenvalue $\lambda_{1,p}$  of the $p$-Laplacian satisfies
\begin{numcases}
{\la_{1,p}\leq} N^{|2-p|/2}\frac{n^{p/2}}{\big(\vol(M)\big)^{p-1}}\Big(\int_M|\mathbf{H}|^{\frac{p}{(p-1)}}\Big)^{p-1}& for $c=0$, \label{Mao0}\\
(N+1)^{|2-p|/2}\frac{n^{p/2}}{\big(\vol(M)\big)^{p-1}}\Big(\int_M\big(1+|\mathbf{H}|^2\big)^{\frac{p}{2(p-1)}}\Big)^{p-1}& for $c=1$. \label{Mao1}
\end{numcases}
\end{thm}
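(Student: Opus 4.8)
The plan is to run a $p$-Laplacian analogue of Takahashi's classical argument, using the ambient coordinate functions of the immersion as test functions in the Rayleigh characterization. Write the immersion as a position vector $\phi=(\phi^A)\colon M\to\mathbb{R}^{K}$, where for $c=0$ we take $\phi\colon M\to\R$ (so $K=N$) and for $c=1$ we compose with the standard isometric embedding $\S\hookrightarrow\mathbb{R}^{N+1}$ (so $K=N+1$); in either case $\phi$ is isometric, hence $\sum_A|\nabla\phi^A|^2=n$ pointwise. First I would arrange the balancing condition $\int_M|\phi^A|^{p-2}\phi^A=0$ for every $A$. Since $t\mapsto\int_M|\phi^A-t|^{p-2}(\phi^A-t)$ is continuous, strictly decreasing (because $s\mapsto|s|^{p-2}s$ is strictly increasing for $p>1$), and changes sign from $+\infty$ to $-\infty$, each coordinate can be centered independently by the intermediate value theorem. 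After translating $\phi$ by the resulting vector, each $\phi^A$ satisfies $\int_M|\phi^A|^{p-2}\phi^A=0$ and so becomes an admissible test function.

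With the constraint in hand, the variational characterization gives $\la_{1,p}\int_M|\phi^A|^p\le\int_M|\nabla\phi^A|^p$ for each $A$; summing over $A$ yields $\la_{1,p}\sum_A\int_M|\phi^A|^p\le\sum_A\int_M|\nabla\phi^A|^p$. The numerator is controlled by comparing the $\ell^p$ and $\ell^2$ norms of the vector $(|\nabla\phi^A|)_A\in\mathbb{R}^{K}$: from $\sum_A|\nabla\phi^A|^2=n$ one gets $\sum_A|\nabla\phi^A|^p\le n^{p/2}$ when $p\ge2$ and $\sum_A|\nabla\phi^A|^p\le K^{(2-p)/2}n^{p/2}$ when $1<p\le2$, hence $\sum_A\int_M|\nabla\phi^A|^p\le c_1\,n^{p/2}\vol(M)$ with $c_1=1$ for $p\ge2$ and $c_1=K^{(2-p)/2}$ for $1<p\le2$.

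The crux is to bound the denominator $\sum_A\int_M|\phi^A|^p$ from below by the mean curvature. Here I would use Takahashi's identity $\Delta\phi=n\mathbf{H}$ (with $\mathbf{H}$ the mean curvature of the Euclidean immersion into $\mathbb{R}^{K}$, which is exactly the $\mathbf{H}$ of \eqref{Mao0} when $c=0$ and satisfies $|\mathbf{H}|^2=1+|\mathbf{H}_{\S}|^2$ when $c=1$, accounting for the $1+|\mathbf{H}|^2$ in \eqref{Mao1}). Since $\tfrac12\Delta|\phi|^2=|\nabla\phi|^2+\langle\phi,\Delta\phi\rangle=n+n\langle\phi,\mathbf{H}\rangle$ integrates to zero on the closed manifold $M$, we obtain $\int_M\langle\phi,\mathbf{H}\rangle=-\vol(M)$; this is unaffected by the balancing translation, because $\int_M\mathbf{H}=\tfrac1n\int_M\Delta\phi=0$. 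Cauchy--Schwarz followed by Hölder with conjugate exponents $p$ and $p/(p-1)$ then gives $\vol(M)\le(\int_M|\phi|^p)^{1/p}(\int_M|\mathbf{H}|^{p/(p-1)})^{(p-1)/p}$, that is, $\int_M|\phi|^p\ge\vol(M)^p/(\int_M|\mathbf{H}|^{p/(p-1)})^{p-1}$; a second $\ell^2$--$\ell^p$ comparison converts $\int_M|\phi|^p$ into $\sum_A\int_M|\phi^A|^p$ at the cost of a factor $c_2$, equal to $K^{(p-2)/2}$ for $p\ge2$ and to $1$ for $1<p\le2$. Combining the two bounds gives $\la_{1,p}\le c_1c_2\,n^{p/2}(\int_M|\mathbf{H}|^{p/(p-1)})^{p-1}/\vol(M)^{p-1}$, and the bookkeeping of the norm comparisons shows $c_1c_2=K^{|2-p|/2}$ in every regime, which is the asserted constant with $K=N$ or $K=N+1$. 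I expect the main obstacle to be precisely arranging these two comparisons in the correct directions---the gradient sum needs the $\ell^p\le\ell^2$ side while the position sum needs the $\ell^2\le\ell^p$ side---so that the $K$-dependence collapses to the single exponent $|2-p|/2$, together with verifying that centering for the nonlinear constraint leaves Takahashi's identity intact.
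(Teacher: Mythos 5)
Your proof is correct. The paper itself does not prove this statement---it is quoted from Du--Mao \cite{DM15}---but your argument (center each coordinate function so that $\int_M|\phi^A-t_A|^{p-2}(\phi^A-t_A)=0$, use Takahashi's identity $\Delta\phi=n\mathbf{H}$ to bound $\int_M|\phi|^p$ from below via $\int_M\langle\phi,\mathbf{H}\rangle=-\vol(M)$, and track the $\ell^2$--$\ell^p$ comparison constants) is precisely the coordinate-function method of the cited source, which the paper's introduction sketches and explains fails for $c=-1$.
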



 In this paper, we generalize the Reilly inequality \eqref{Rei} to the case of $p$-Laplacian for all $c$. 
 Our main theorem is the following.
\begin{thm}\label{thm1.1}
	Let $M$ be an $n(\geq 2)$-dimensional  closed orientable submanifold in an $N$-dimensional space form $\F$. Then the first non-zero eigenvalue  $\lambda_{1,p}$ of the $p$-Laplacian satisfies 
		\begin{numcases}
	{\la_{1,p}\leq}
	 (N+1)^{1-\frac{p}{2}}\frac{n^{p/2}}{\big(\vol(M)\big)^{p/2}}\Big(\int_M\big(c+|\mathbf{H}|^2\big)\Big)^{p/2} & for $1 < p\leq 2$,	\label{eq_thm1} \\
	 (N+1)^{\frac{p}{2}-1}
	 \frac{ n^{p/2}}{\vol(M)}\int_M(|c+|\mathbf{H}|^2|)^{p/2}& for  $2 < p \le \frac n2 +1$ \label{eq_thm2}.
	\end{numcases}
	Moreover, the equality holds if and only if $p=2$ and $M$ is minimally immersed in a geodesic sphere of radius $r_c$ of $\F$  with $r_0=\big(n\big/\lambda_1^{\Delta}\big)^{1/2}, r_1=\arcsin r_0$ and $r_{-1}=\arcsinh r_0$.  
\end{thm}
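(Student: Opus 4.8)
The plan is to reduce to the round sphere and run the Reilly/El Soufi--Ilias argument in a form adapted to the $p$-Laplacian. First I would compose the given immersion $x\colon M\to\F$ with a fixed conformal diffeomorphism $\psi\colon\F\to\S$ (the identity for $c=1$, and the standard conformal models for $c=0,-1$, as in \cite{ESI92}) to obtain $\phi=(\phi_1,\dots,\phi_{N+1})=\psi\circ x\colon M\to\S\subset\mathbb{R}^{N+1}$. Writing $\rho$ for the conformal factor of $\phi$, this furnishes $N+1$ functions on $M$ satisfying the two structural identities $\sum_\alpha\phi_\alpha^2\equiv 1$ and $\sum_\alpha|\nabla\phi_\alpha|^2=n\rho^2$. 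The analytic input I would import from the $p=2$ theory is the comparison of the conformal factor with the extrinsic curvature underlying \eqref{Rei}, namely the integrated bound $\int_M\rho^2\le\int_M(c+|\mathbf{H}|^2)$ and, pointwise, $\rho^2\le|c+|\mathbf{H}|^2|$; this is precisely what converts $\sum_\alpha|\nabla\phi_\alpha|^2$ into the curvature quantity on the right-hand side of the theorem.

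The second step is to make the $\phi_\alpha$ admissible in the variational characterization, which for the $p$-Laplacian requires the nonlinear balancing $\int_M|\phi_\alpha|^{p-2}\phi_\alpha=0$ for every $\alpha$. I would achieve this by a center-of-mass argument: replacing $\phi$ by $\gamma_a\circ\phi$ for the conformal dilations $\gamma_a$ of $\S$ parametrized by $a\in B^{N+1}$, and applying a Brouwer/degree argument to the map $a\mapsto\big(\int_M|(\gamma_a\circ\phi)_\alpha|^{p-2}(\gamma_a\circ\phi)_\alpha\big)_\alpha\in\mathbb{R}^{N+1}$, whose boundary behaviour as $|a|\to1$ is odd and nondegenerate. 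After this balancing each $\phi_\alpha$ is a legitimate test function, and summing the Rayleigh quotient over $\alpha$ yields
\begin{equation*}
\lambda_{1,p}\sum_{\alpha}\int_M|\phi_\alpha|^p\ \le\ \sum_{\alpha}\int_M|\nabla\phi_\alpha|^p.
\end{equation*}

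It remains to estimate the two sides, and here the dichotomy $1<p\le 2$ versus $p>2$ is exactly the direction of the comparison between the $\ell^p$ and $\ell^2$ norms over the $N+1$ components. For $1<p\le 2$ the power-mean inequality gives $\sum_\alpha|\nabla\phi_\alpha|^p\le (N+1)^{1-p/2}(n\rho^2)^{p/2}$ for the numerator and $\sum_\alpha|\phi_\alpha|^p\ge(\sum_\alpha\phi_\alpha^2)^{p/2}=1$ for the denominator; combining these with Hölder's inequality $\int_M\rho^p\le(\int_M\rho^2)^{p/2}\vol(M)^{1-p/2}$ and the integrated comparison $\int_M\rho^2\le\int_M(c+|\mathbf{H}|^2)$ produces \eqref{eq_thm1}. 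For $2<p\le \tfrac n2+1$ the two comparisons reverse: the denominator obeys $\sum_\alpha|\phi_\alpha|^p\ge(N+1)^{1-p/2}(\sum_\alpha\phi_\alpha^2)^{p/2}=(N+1)^{1-p/2}$, producing the factor $(N+1)^{p/2-1}$, while the numerator satisfies $\sum_\alpha|\nabla\phi_\alpha|^p\le(\sum_\alpha|\nabla\phi_\alpha|^2)^{p/2}=(n\rho^2)^{p/2}$; inserting the pointwise bound $\rho^p\le|c+|\mathbf{H}|^2|^{p/2}$ yields \eqref{eq_thm2} and explains both the integrand $|c+|\mathbf{H}|^2|^{p/2}$ and the absolute value (for $c=-1$ the quantity $c+|\mathbf{H}|^2$ need not be nonnegative). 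I expect the restriction $p\le \tfrac n2+1$ to enter precisely here, in securing the pointwise conformal-factor estimate for $p>2$.

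The hardest part will be the nonlinear centering of the second step: for $p\neq 2$ the balancing condition is no longer linear in the coordinates, so Hersch's lemma must be replaced by a genuine degree-theoretic existence argument, and one must verify that the boundary map is nondegenerate. A secondary difficulty is the careful bookkeeping of the conformal factor and the signs in the case $c=-1$. Finally, for the equality statement I would trace back through the chain: equality forces each power-mean and Hölder step to be an equality, which can only occur when $p=2$, at which point the argument collapses to the conformal-volume equality case and one recovers the rigidity of Theorem \ref{thm_Rei}, namely that $M$ is minimally immersed in a geodesic sphere of the stated radius $r_c$.
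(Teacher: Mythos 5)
Your overall architecture — conformal map to $\S$, degree-theoretic balancing of the nonlinear center of mass, summing the Rayleigh quotients over the $N+1$ coordinates, and splitting into $\ell^p$-versus-$\ell^2$ comparisons according to whether $p\le 2$ or $p>2$ — is exactly the paper's, and the case $1<p\le 2$ goes through as you describe, since there the only input needed is the \emph{integrated} comparison $\int_M e^{2\rho}\le\int_M(c+|\mathbf{H}|^2)$, which is the classical El Soufi--Ilias computation. The genuine gap is in the case $2<p\le\frac n2+1$: the \emph{pointwise} bound $e^{2\rho}\le|c+|\mathbf{H}|^2|$ that you invoke there is false in general. The conformal factor satisfies the identity
\begin{equation*}
e^{2\rho}=(c+|\mathbf{H}|^2)-\frac{2}{n}\Delta \rho-\frac{n-2}{n}|\nabla\rho|^2-|(\bn\rho)^\bot-\mathbf{H}|^2,
\end{equation*}
and the term $-\frac2n\Delta\rho$ has no pointwise sign; it only disappears upon integration. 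So there is no pointwise control of $e^{2\rho}$ by the curvature quantity, and your plan for the numerator estimate in the $p>2$ case breaks at precisely this step. Relatedly, you have misplaced where the hypothesis $p\le\frac n2+1$ enters: it has nothing to do with a pointwise conformal-factor estimate.

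The paper's fix is an integral argument: multiply the identity above by $e^{(p-2)\rho}$, integrate over $M$, and integrate the Laplacian term by parts; this produces a gradient term with coefficient $\frac{n-2-2(p-2)}{n}$, which is nonnegative exactly when $n\ge 2p-2$, i.e. $p\le\frac n2+1$ — this is where the restriction is used. One obtains $\int_M e^{p\rho}\le\int_M(c+|\mathbf{H}|^2)e^{(p-2)\rho}$, and then Young's inequality with exponents $\frac p2$ and $\frac{p}{p-2}$ lets you absorb $\frac{p-2}{p}\int_M e^{p\rho}$ back into the left-hand side, yielding $\int_M e^{p\rho}\le\int_M|c+|\mathbf{H}|^2|^{p/2}$. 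Substituting this into the test-function inequality gives \eqref{eq_thm2}. You would need to add this integration-by-parts and absorption step to make your $p>2$ case work; the rest of your outline, including the balancing lemma and the equality analysis, matches the paper's proof.
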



\begin{rem}
When $p=2$ our estimate recovers \eqref{Rei}. 
	When $1<p\leq 2$, we have $\frac{p}{2(p-1)}\geq 1$. Then by H\"{o}lder inequality, we have 
	\begin{align*}
		\int_M\big(c+|\mathbf{H}|^2\big)&\leq \big(\vol(M)\big)^{1-\frac{2(p-1)}{p}}\Big(\int_M\big(c+|\mathbf{H}|^2\big)^{\frac{p}{2(p-1)}}\Big)^{\frac{2(p-1)}{p}}.
	\end{align*}
	Hence, the upper bound in \eqref{eq_thm1} is better than \eqref{Mao1} for $c=1$.


	When $2<p\leq \frac{n}{2}+1$, \eqref{eq_thm2} is weaker than \eqref{Mao0} and \eqref{Mao1} for $c=0$ and $c=1$. But \eqref{eq_thm2} is  a new estimate for $c=-1$.
\end{rem}

\begin{rem}
Matei \cite{Mat13} gave an upper bound for $\lambda_{1,p}$ in terms of conformal volume  when $1<p\leq \dim M$. 
\end{rem}

This paper is organized as follows. In Section 2, we recall structure equations for a submanifold $M$ in $\F$. We also show how some geometric quantities change when the metric on $\F$ changes under the conformal transformation. In Section 3, we prove Theorem \ref{thm1.1}. To estimate the upper bound of $\lambda_{1,p}$, as the method in \cite{DM15} does not work for $c=-1$, we find suitable test functions by conformal transformation to a unit sphere.

\textbf{Acknowledgment}: 
  This work is done while the first author is visiting UCSB. He would like to thank UCSB math department for the hospitality, and he also would like to acknowledge financial support from China Scholarship Council and Top International University Visiting Program for Outstanding Young scholars of Northwestern Polytechnical University.

\section{Preliminaries and notations}
In this section, we recall some well-known facts on geometry of submanifolds and conformal geometry by using the moving frame method.  We use the following convention on the ranges of indices except special declaration:
\begin{equation*}
1\leq i, j, k, \ldots \leq n;\quad
n+1\leq\alpha, \beta, \gamma, \ldots \leq N;\quad
1\leq A, B, C, \ldots \leq N.
\end{equation*}

\subsection{Structure equations for submanifolds}
Let $x$ be the immersion from $M^n$ to an  $N$-dimensional Riemannian manifold $(\bar{M},\bar{g})$. Then $M$ has an induced metric $g_M=x^*\bar{g}$.

We denote the Levi-Civita connections on $M$ and $\bar{M}$ by $\nabla$ and $\bar{\nabla}$ respectively. Choose an orthonormal frame $\{e_A\}_{A=1}^{N}$ on $\bar{M}$ such that $\{e_i\}_{i=1}^n$ are tangent to $M$ and $\{e_{\alpha}\}_{\alpha=n+1}^{N}$ are normal to $M$.  Let $\{\omega_A\}_{A=1}^{N}$ be the dual coframe of $\{e_A\}_{A=1}^{N}$. Then the structure equations of $\bar{M}$ are (cf. \cite{Chern1968}):
\begin{equation}
\left\{\begin{aligned}\label{eq21}
& d\omega_{A}=\sum_{B}\omega_{AB}\wedge\omega_{B},\quad \omega_{AB}+\omega_{BA}=0, \\
&d\omega_{AB}-\sum_C\omega_{AC}\wedge\omega_{CB}=-\frac{1}{2}\sum_{C,D}\bar{R}_{ABCD}\,\omega_C\wedge\omega_D,
\end{aligned}\right.
\end{equation}
where $\{\omega_{AB}\}$ are the connection forms on $\bar{M}$, and  $\bar{R}_{ABCD}$ are components of the curvature tensor of  $\bar{M}$. 

Denote $x^{\ast}\omega_{A}=\theta_{A}, x^{\ast}\omega_{AB}=\theta_{AB}$, then restricted to $M$, we have (cf. \cite{Chern1968})
\begin{equation}\label{eq23}
\theta_{\alpha}=0, \quad\theta_{i\alpha}=\sum_j h_{ij}^{\alpha}\theta_{j}.
\end{equation}
and
\begin{equation}
\left\{\begin{aligned}\label{eq22}
&d\theta_{i}=\sum_{j}\theta_{ij}\wedge\theta_{j}, \quad\theta_{ij}+\theta_{ji}=0,\\
& d\theta_{ij}-\sum_k\theta_{ik}\wedge\theta_{kj}=-\frac{1}{2}\sum_{k,l}R_{ijkl}\,\theta_k\wedge\theta_l;
\end{aligned}
\right.
\end{equation}
where $R_{ijkl}$ are components of the curvature tensor of $M$, and $h_{ij}^{\alpha}$ are components of the second fundamental form of $M$ in $\bar{M}$.


We take $\bar{M}=\F$, then $\bar{R}_{ABCD}=c(\delta_{AC}\delta_{BD}-\delta_{AD}\delta_{BC})$. Pulling back \eqref{eq21} by $x$ and using \eqref{eq23} and \eqref{eq22}, we obtain 
the Gauss equations
\begin{align}
\label{gauss}
R_{ijkl}&=(\delta_{ik}\delta_{jl}-\delta_{il}\delta_{jk})c+\sum_\alpha( h_{ik}^{\alpha}h_{jl}^{\alpha}-h_{il}^{\alpha}h_{jk}^{\alpha}),\\
R&=n(n-1)c+n^2H^2-S,\label{gauss3}
\end{align}
where $R$ is the scalar curvature of $M$, $S=\sum\limits_{\alpha,i,j}(h_{ij}^{\alpha})^2$ is the
norm square of the second fundamental form, $\mathbf{H}=\sum\limits_\alpha H^{\alpha}e_\alpha=\frac{1}{n}
\sum\limits_\alpha(\sum\limits_i h_{ii}^{\alpha})e_\alpha$ is the mean curvature  vector of $M$, and $H=|\mathbf{H}|$.

\subsection{Conformal relations} In this subsection, we focus on how curvature and the second fundamental form change under the conformal transformation. Although these relations are well-known (cf. \cite{Che73a,Che74}),  we give a brief proof for readers' convenience,  by using the moving frame method.

Now assume that $\bar{M}$ is equipped with a new metric $\tbg=e^{2\rho}\bar{g}$ which is conformal to $\bar{g}$, where $\rho\in C^{\infty}(\bar{M})$. Then $\{\et_A=e^{-\rho}e_A\}$ is an orthonormal frame of $(\bar{M},\tbg)$, and $\{\tom_A=e^{\rho}\omega_A\}$ is the dual coframe of  $\{\et_A\}$. The structure equations of $(\bar{M},\tbg)$ are given by

\begin{equation}
\left\{\begin{aligned}\label{eq25}
&d\tom_{A}=\sum\limits_{B}\tom_{AB}\wedge\tom_{B},\\
&\tom_{AB}+\tom_{BA}=0,
\end{aligned}\right.
\end{equation}
where $\{\tom_{AB}\}$ are the connection forms on $(\bar{M},\tbg)$.
 Denoting $\tg_M=x^{\ast}\tbg, \ x^{\ast}\tom_{A}=\tt_{A}, \ x^{\ast}\tom_{AB}=\tt_{AB}$, then restricted to $(M,\tg_M)$, we have
\begin{equation}\label{eq29}\tt_{\alpha}=0, \quad\tt_{i\alpha}=\sum_j \th_{ij}^{\alpha}\tt_{j},
\end{equation}
and
\begin{equation}
\left\{
\begin{aligned}\label{eq28}
&\displaystyle d\tt_{i}=\sum_{j}\tt_{ij}\wedge\tt_{j}, \quad\tt_{ij}+\tt_{ji}=0, \\
\displaystyle &d\tt_{ij}-\sum_k\tt_{ik}\wedge\tt_{kj}=-\frac{1}{2}\sum_{k,l}\tR_{ijkl}\,\tt_k\wedge\tt_l;
\end{aligned}
\right.\end{equation}
where $\tR_{ijkl}$ are components of the curvature tensor of $(M, \tg_M)$ and $\th_{ij}^{\alpha}$ are components of the second fundamental form of $(M, \tg_M)$ in $(\bar{M}, \tbg)$.

From (\ref{eq21}) and (\ref{eq25}), we can solve 
\begin{equation}
\begin{aligned}\label{eq26}
\tom_{AB}=\omega_{AB}+\rho_A\omega_B-\rho_B\omega_A,
\end{aligned}
\end{equation}
where $\rho_A$ is the covariant derivative of $\rho$ with respect to $e_A$, i.e. $d\rho=\sum_A\rho_Ae_A$. 

We derive from \eqref{eq22}, \eqref{eq26} and \eqref{eq28}
\begin{align}
e^{2\rho}\tR_{ijkl}=&R_{ijkl}-(\rho_{ik}\delta_{jl}+\rho_{jl}\delta_{ik}-\rho_{il}\delta_{jk}-\rho_{jk}\delta_{il})\nonumber\\
&+(\rho_i\rho_k\delta_{jl}+\rho_j\rho_l\delta_{ik}-\rho_j\rho_k\delta_{il}-\rho_i\rho_l\delta_{jk})\nonumber\\
&-|\nabla\rho|^2(\delta_{ik}\delta_{jl}-\delta_{il}\delta_{jk}).\label{eq209}
\end{align}

By pulling back (\ref{eq26}) to $M$ by $x$ and using (\ref{eq23}) and (\ref{eq29}), we have
\begin{equation}\label{eq208}
\th_{ij}^{\alpha}=e^{-\rho}(h_{ij}^{\alpha}-\rho_{\alpha}\delta_{ij}), \quad\tH^{\alpha}=e^{-\rho}(H^{\alpha}-\rho_{\alpha}),
\end{equation}
from this, it is easy to show  the well-known relation 
\begin{equation}\label{eq_um}
e^{2\rho}(\tilde{S}-n\tH^2)=S-nH^2.
\end{equation}

\section{Proof of Theorem \ref{thm1.1}}
In this section, we prove our main result, Theorem \ref{thm1.1}. At first, we recall some  lemmas from \cite{Mat13} to our setting.  For convenience of the reader, we also give the proof here as we need the proof to analyse the equality case.
\begin{lem}[cf. Lemma 2.6 in \cite{Mat13}]\label{lem3.1}
Let $x: M\to \F$ be the immersion from an $n$-dimensional closed orientable submanifold to an $N$-dimensional space form $\F$. Then for $p>1$, there exists a regular conformal map $\Gamma: \F\to \S\subset\mathbb{R}^{N+1}$ such that  the immersion $\Phi=\Gamma\circ x=(\Phi^1, \cdots, \Phi^{N+1})$ satisfies that
\begin{equation}\label{eq_lem31}
\int_M |\Phi^A|^{p-2}\Phi^A\,dv_M=0,~A=1,\ldots,N+1.
\end{equation}
\end{lem}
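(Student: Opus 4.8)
The plan is to reduce the balancing condition \eqref{eq_lem31} to a topological fixed-point statement on $\S$, following the center-of-mass idea of Hersch and its conformal refinement by Li--Yau and El Soufi--Ilias, but adapted to the exponent $p$. First I would fix one conformal map $\Gamma_0\colon\F\to\S\subset\mathbb R^{N+1}$ and then move it by the conformal dilations of $\S$, which are parametrized by the open unit ball $B^{N+1}=\{a\in\mathbb R^{N+1}:|a|<1\}$; the correct value of the parameter will be produced by a degree argument. For the existence of $\Gamma_0$ I would treat the three curvatures separately: for $c=1$ take the identity embedding, for $c=0$ take inverse stereographic projection $\R\to\S\setminus\{\mathrm{pt}\}$, and for $c=-1$ take a conformal diffeomorphism of $\H$ onto an open geodesic ball (spherical cap) of $\S$ (for instance the Poincar\'e ball model followed by an inverse stereographic projection). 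In each case $\Gamma_0$ is conformal, so $\Phi_0:=\Gamma_0\circ x\colon M\to\S$ is a conformal immersion; being an immersion of a closed manifold it is locally injective, hence $\Phi_0^{-1}(q)$ is finite for every $q\in\S$.

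For $a\in B^{N+1}$ let $\phi_a\colon\S\to\S$ be the conformal dilation
\begin{equation*}
\phi_a(y)=a+\frac{1-|a|^2}{|y+a|^2}\,(y+a),\qquad y\in\S,
\end{equation*}
which is a conformal diffeomorphism with $\phi_0=\mathrm{id}$ and which concentrates near the boundary: for fixed $y\neq-\xi$ one has $\phi_a(y)\to\xi$ whenever $a\to\xi\in\partial B^{N+1}=\S$. Put $\Phi_a:=\phi_a\circ\Phi_0=(\Phi_a^1,\dots,\Phi_a^{N+1})$, so that $\sum_A(\Phi_a^A)^2\equiv1$, and define the continuous map
\begin{equation*}
Y\colon B^{N+1}\to\mathbb R^{N+1},\qquad Y(a)=\Big(\int_M|\Phi_a^A|^{p-2}\Phi_a^A\,dv_M\Big)_{A=1}^{N+1}.
\end{equation*}
A zero of $Y$ is exactly what the lemma requires: if $Y(a_0)=0$ then $\Gamma:=\phi_{a_0}\circ\Gamma_0$ and $\Phi:=\Phi_{a_0}$ satisfy \eqref{eq_lem31}.

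To produce a zero I would analyze $Y$ near $\partial B^{N+1}$. Since $|\Phi_a^A|\le1$ the integrand is bounded by $1$, and since $\Phi_0^{-1}(-\xi)$ is finite (hence of measure zero) the concentration property gives $\Phi_a(m)\to\xi$ for almost every $m\in M$ as $a\to\xi$. Dominated convergence then shows that $Y$ extends continuously to $\overline{B^{N+1}}$ with boundary values
\begin{equation*}
Y(\xi)=\vol(M)\,g(\xi),\qquad g(\xi)=\big(|\xi^1|^{p-2}\xi^1,\dots,|\xi^{N+1}|^{p-2}\xi^{N+1}\big).
\end{equation*}
Because $p>1$ the map $g$ is continuous, odd, and nowhere zero on $\S$ (if $g(\xi)=0$ then all $\xi^A=0$, which is impossible). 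Hence $g/|g|\colon\S\to\S$ is an odd map, so by Borsuk's theorem it has odd, in particular nonzero, degree. If $Y$ were nowhere zero on $\overline{B^{N+1}}$, the homotopy $(\xi,t)\mapsto Y(t\xi)$ would deform $Y|_{\S}=\vol(M)\,g$ to the constant $Y(0)$ within $\mathbb R^{N+1}\setminus\{0\}$, contradicting the nontriviality of its degree. Therefore $Y(a_0)=0$ for some $a_0\in B^{N+1}$, which completes the argument.

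The main obstacle is the boundary analysis: one must justify the continuous extension of $Y$ to $\overline{B^{N+1}}$, which rests on the two facts that the concentration $\phi_a\to\xi$ holds off the single point $-\xi$ and that $\Phi_0^{-1}(-\xi)$ is negligible, the latter using that a conformal immersion of a closed manifold is locally injective. The remaining delicate point is purely topological, namely recognizing that the boundary map $g$ is odd and hence of nonzero degree; this is the step that genuinely uses $p>1$, so that $t\mapsto|t|^{p-2}t$ is a continuous odd function vanishing only at $t=0$.
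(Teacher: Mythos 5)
Your argument is correct and is essentially the paper's proof: both run Hersch's conformal center-of-mass trick, moving a fixed conformal map $\F\to\S$ by the conformal dilations of $\S$ and extracting a balanced one from a degree/homotopy contradiction whose boundary map is $\xi\mapsto(|\xi^A|^{p-2}\xi^A)/\|\cdot\|$ (the paper parametrizes the dilations by $[0,+\infty)\times\S$ via the flow $\gamma_t^a$ rather than by the ball $B^{N+1}$). Your version is in fact slightly more careful than the paper's at the boundary, where you justify the continuous extension of $Y$ via dominated convergence and the negligibility of $\Phi_0^{-1}(-\xi)$.
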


\begin{proof}
	The main idea of Lemma \ref{lem3.1} is inspired by the case $p=2$ (cf. \cite{LY82,ESI00}). 
	  
	First observe that there is the standard conformal map $\Pi_c$ from $\F$ to $\S$. Here $\Pi_c$ is identity when $c=1$, and $\Pi_c$ can be given by the stereographic projection when $c=0$ or $c=-1$.
	
	For any $a\in \S$, consider the flow  $\gamma_t^a$ generated by the  vector field $V_a(x)=a-\l x,a\r x$ on $\S$. In fact, $\gamma_t^a=\pi_a^{-1}(e^t\pi_a(x)), x\in \S$, where $\pi_a$ is the stereographic projection of pole $a$. It is easy to see that  $\gamma_0^a$ is identity map on $\S$, and $\gamma_t^a(x)\to a$ for any $x\in\S$ as $t\to +\infty$.
	
	We claim there is a $\gamma_t^a$ such that $\Gamma=\gamma_t^a\circ \Pi_c$ satisfies the required property \eqref{eq_lem31}. If not, we can define a map $F(t,a): [0,+\infty) \times \S\to \S$ as follows:
	
	\begin{equation}
		F(t,a)=\frac{(\int_M |\Phi^1|^{p-2}\Phi^1,\cdots,\int_M |\Phi^{N+1}|^{p-2}\Phi^{N+1})}{\|(\int_M |\Phi^1|^{p-2}\Phi^1,\cdots,\int_M |\Phi^{N+1}|^{p-2}\Phi^{N+1})\|}.
	\end{equation}
	
	Now $F(0,\cdot)$ maps any  $a\in \S$ to a fixed point in $\S$. And $F(+\infty, \cdot)$ maps $a=(a^1,\cdots, a^{N+1})\in\S$ to $\frac{(|a^1|^{p-2}a^1,\cdots,  |a^{N+1}|^{p-2} a^{N+1})}{\|(|a^1|^{p-2}a^1,\cdots,  |a^{N+1}|^{p-2} a^{N+1})\|}$, which is bijective. So $\deg F(0,\cdot)=0$ and $\deg F(+\infty,\cdot)$ is odd. But $F(\cdot,\cdot)$ gives a homotopy between  $F(0,\cdot)$ and $F(+\infty,\cdot)$, which is a contradiction. So we complete the proof.
\end{proof}
 
 	Using the test function constructed in the above lemma, we can get an upper bound for $\lambda_{1,p}$  in terms of the conformal function, compare Lemma 2.7 in \cite{Mat13}. 
\begin{lem}\label{lem3.2}
 	Let $M$ be an $n(\geq 2)$-dimensional  closed orientable submanifold in an $N$-dimensional space form $\F$.  Denote by $h_c$  the standard metric on $\F$  and assume $\Gamma^{\ast}h_1=e^{2\rho}h_c$, where $\Gamma$ is the conformal map in Lemma \ref{lem3.1}. Then we have, for all $p>1$, 
 	\begin{equation}\label{eq3.3}
 		\lambda_{1,p}\vol(M)\leq (N+1)^{|1-p/2|}n^{p/2}\int_M(e^{2\rho})^{p/2}.
 	\end{equation}
 \end{lem}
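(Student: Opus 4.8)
The plan is to use the components $\Phi^A$ of the conformally transformed immersion $\Phi = \Gamma \circ x$ as test functions in the variational characterization of $\lambda_{1,p}$. By Lemma~\ref{lem3.1}, each $\Phi^A$ satisfies the orthogonality condition $\int_M |\Phi^A|^{p-2}\Phi^A\, dv_M = 0$, so each is admissible in the Rayleigh quotient, giving
\[
\lambda_{1,p} \int_M |\Phi^A|^p \leq \int_M |\nabla \Phi^A|^p \quad \text{for each } A = 1, \dots, N+1.
\]
The strategy is to sum over $A$, control $\sum_A \int_M |\Phi^A|^p$ from below and $\sum_A \int_M |\nabla \Phi^A|^p$ from above, and then relate everything to the conformal factor $e^{2\rho}$.

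First I would exploit the constraint $\sum_A (\Phi^A)^2 = 1$, which holds because $\Phi$ maps into the unit sphere $\S \subset \mathbb{R}^{N+1}$. For the denominator, summing $\sum_A |\Phi^A|^p$ and comparing the $\ell^p$ and $\ell^2$ norms of the vector $(\Phi^1, \dots, \Phi^{N+1})$ via the power-mean (or Hölder) inequality yields a bound of the form $(N+1)^{\text{const}} \sum_A |\Phi^A|^p \geq (\sum_A (\Phi^A)^2)^{p/2} = 1$, where the sign of the exponent on $(N+1)$ depends on whether $p \leq 2$ or $p > 2$; this is exactly where the factor $(N+1)^{|1-p/2|}$ will originate. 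Integrating then bounds $\sum_A \int_M |\Phi^A|^p$ below by a multiple of $\vol(M)$.

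Next I would estimate the numerator. Differentiating the sphere constraint gives $\sum_A \Phi^A \nabla \Phi^A = 0$ and, more usefully, $\sum_A |\nabla \Phi^A|^2 = |d\Phi|^2$, the energy density of the map $\Phi$. Since $\Phi = \Gamma \circ x$ and $\Gamma^* h_1 = e^{2\rho} h_c$, the pullback metric $\Phi^* h_1 = x^*(e^{2\rho} h_c)$ is conformal to $g_M$ with factor $e^{2\rho}$, so the energy density satisfies $\sum_A |\nabla \Phi^A|^2 = n\, e^{2\rho}$ pointwise (the trace of the conformal factor over the $n$ tangent directions). To convert $\sum_A \int_M |\nabla \Phi^A|^p$ into $\sum_A \int_M |\nabla \Phi^A|^2$ raised to the power $p/2$, I would again apply a power-mean inequality across the $N+1$ coordinates, producing the complementary power of $(N+1)^{|1-p/2|}$ and the exponent $p/2$ on $n\, e^{2\rho}$, thereby arriving at $\sum_A \int_M |\nabla \Phi^A|^p \leq (N+1)^{|1-p/2|} n^{p/2} \int_M (e^{2\rho})^{p/2}$.

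The main obstacle is bookkeeping the two power-mean/Hölder steps so that the exponent on $(N+1)$ comes out as the single absolute value $|1-p/2|$ rather than being split awkwardly between numerator and denominator, and ensuring both the $p \leq 2$ and $p > 2$ cases are handled by the same inequality chain with the correct direction of inequality. Combining the lower bound on the denominator and the upper bound on the numerator through the summed Rayleigh inequalities then gives \eqref{eq3.3} directly. I expect the pointwise identity $\sum_A |\nabla \Phi^A|^2 = n\, e^{2\rho}$, which encodes the conformality of $\Phi$ into the sphere, to be the conceptual crux, while the rest is careful application of elementary norm comparisons.
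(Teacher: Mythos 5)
Your proposal is correct and follows essentially the same route as the paper: test functions $\Phi^A$ from Lemma \ref{lem3.1}, the sphere constraint $\sum_A(\Phi^A)^2=1$, the conformal energy identity $\sum_A|\nabla\Phi^A|^2=ne^{2\rho}$, and $\ell^p$--$\ell^2$ norm comparisons over the $N+1$ coordinates. The bookkeeping worry you raise resolves exactly as the paper does it: in each regime only one of the two comparisons costs a factor of $(N+1)^{|1-p/2|}$ (the numerator step when $p\le 2$, the denominator step when $p\ge 2$), while the other goes in the free direction.
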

 \begin{proof}
  	By Lemma \ref{lem3.1}, we can choose $\Phi^A$ as the test function, so 
 	\begin{equation}\label{eq3.7}
 	\la_{1,p}\int_M|\Phi^A|^p\leq|\nabla\Phi^A|^p, \quad 1\leq A\leq N+1.
 	\end{equation}
 	Note that $\sum_{A=1}^{N+1}|\Phi^A|^2=1$, then $|\Phi^A|\leq 1$. We also have 
 	\begin{equation}\label{eq32}
 	\sum_{A=1}^{N+1}|\nabla\Phi^A|^2=\sum_{i=1}^{n}|\nabla_{e_i}\Phi|^2=ne^{2\rho}.
 	\end{equation}
 	
 	When $1<p\leq 2$, we have
 	\begin{equation}\label{eq3.8}
 	|\Phi^A|^2\leq |\Phi^A|^p,
 	\end{equation}
 	Then by using \eqref{eq3.7}, \eqref{eq32}, \eqref{eq3.8} and the H\"{o}lder inequality, we have
 	\begin{align*}
 	\la_{1,p}\vol(M)&=\la_{1,p}\sum_{A=1}^{N+1}\int_M|\Phi^A|^2\\
 	&\leq\la_{1,p}\sum_{A=1}^{N+1}\int_M|\Phi^A|^p\leq\int_M\sum_{A=1}^{N+1}|\nabla\Phi^A|^p\\
 	&\leq (N+1)^{1-p/2}\int_M\Big(\sum_{A=1}^{N+1}|\nabla\Phi^A|^2\Big)^{p/2}\\
 	&=(N+1)^{1-p/2}\int_M(ne^{2\rho})^{p/2}.
 	\end{align*}
 This is \eqref{eq3.3}.

 When $p\geq 2$, the H\"{o}lder inequality gives
 \begin{equation}\label{eq3-6}
 	1=\sum_{A=1}^{N+1}|\Phi^A|^2\leq (N+1)^{1-\frac 2p}(\sum_{A=1}^{N+1}|\Phi^A|^p)^{2/p},
 \end{equation}
 from which we have
 \begin{equation}\label{eq3-7}
 	\lambda_{1,p}\vol(M)\leq (N+1)^{\frac p2-1}\Big(\sum_{A=1}^{N+1}\lambda_{1,p}\int_M|\Phi^A|^p\Big).
 \end{equation}
On the other hand, we have 
\begin{equation}\label{eq3-8}
	\sum_{A=1}^{N+1}|\nabla\Phi^A|^p\leq(\sum_{A=1}^{N+1}|\nabla\Phi^A|^2)^{p/2}=(ne^{2\rho})^{p/2}.
\end{equation}
Hence \eqref{eq3.3} follows from \eqref{eq3.7}, \eqref{eq3-7} and \eqref{eq3-8}.
 \end{proof}

%

\begin{proof}[Proof of Theorem \ref{thm1.1}]
When $1<p \le 2$,  then $p/2\leq 1$. Using Lemma \ref{lem3.2}  and  the  H\"{o}lder inequality,  we have 
\begin{align*}
\la_{1,p}\vol(M)& \le (N+1)^{1-p/2}n^{p/2}\int_M(e^{2\rho})^{p/2}\\
&\leq (N+1)^{1-p/2}n^{p/2}\big(\vol(M)\big)^{1-p/2}\Big(\int_Me^{2\rho}\Big)^{p/2}.
\end{align*}

Note that we can compute $e^{2\rho}$ using the conformal relations and Gauss equations as follows. 	We take $\bar{M}=\F,\ \tg=h_c, \ \tbg=\Gamma^*h_1$ in Subsection 2.2. 
From \eqref{gauss3}, the Gauss equations for the immersion $x$ and the immersion $\Phi=\Gamma\circ x$ are respectively:
\begin{align}
R=&n(n-1)c+n(n-1)H^2+(nH^2-S),\label{G1}\\
\tR=&n(n-1)+n(n-1)\tH^2+(n\tH^2-\tilde{S}).\label{G2}
\end{align}
Contracting \eqref{eq209} we have
\begin{equation}\label{eq_scal}
e^{2\rho}\tR=R-(n-2)(n-1)|\nabla \rho|^2-2(n-1)\Delta\rho.
\end{equation}

Now from \eqref{G1}, \eqref{G2},  \eqref{eq_scal},  \eqref{eq208} and  \eqref{eq_um}, we derive
\begin{align*}
&n(n-1)(e^{2\rho}-c)+n(n-1)\sum_{\alpha}(H^\alpha-\rho_{\alpha})^2-n(n-1)H^2\\
=&-(n-2)(n-1)|\nabla \rho|^2-2(n-1)\Delta\rho,
\end{align*}
divided by $n(n-1)$ on both sides, we obtain 	\begin{equation}\label{eq_relation}
	e^{2\rho}=(c+|\mathbf{H}|^2)-\frac{2}{n}\Delta \rho-\frac{n-2}{n}|\nabla\rho|^2-|(\bar{\nabla}\rho)^\bot-\mathbf{H}|^2.
\end{equation}
Hence, 
\begin{align*}
\la_{1,p}\vol(M)
&\leq (N+1)^{1-p/2}n^{p/2}\big(\vol(M)\big)^{1-p/2}\Big(\int_Me^{2\rho}\Big)^{p/2}\\
&\leq (N+1)^{1-p/2}\frac{n^{p/2}}{\big(\vol(M)\big)^{p/2-1}}\Big(\int_M\big(c+|\mathbf{H}|^2\big)\Big)^{p/2},
\end{align*}
which is equivalent to \eqref{eq_thm1}.

	When $p>2$, we cannot use $\int_M(e^{2\rho})$ to control $\int_M(e^{2\rho})^{p/2}$ by applying H\"{o}lder inequality directly. 
	Instead multiplying $e^{(p-2)\rho}$ on both sides of \eqref{eq_relation}, and then integrating on $M$ (cf. \cite{CL11}), we obtain
\begin{equation}\label{eq4.1}
\int_Me^{p\rho}\leq\int_M(c+|\mathbf{H}|^2)e^{(p-2)\rho}-\int_M\frac{n-2-2(p-2)}{n}e^{(p-2)}|\nabla\rho|^2\leq\int_M(c+|\mathbf{H}|^2)e^{(p-2)\rho}.
\end{equation}
where we used the assumption $n\geq 2p-2$.

On the other hand, by Young's inequality, we have 
\begin{equation}\label{eq4.2}
\int_M(c+|\mathbf{H}|^2)e^{(p-2)\rho}\leq\frac{1}{p/2}\int_M(|c+|\mathbf{H}|^2|)^{p/2}+\frac{1}{p/(p-2)}\int_Me^{p\rho}.
\end{equation}
Hence we obtain 
\begin{equation}\label{eq4.3}
	\int_Me^{p\rho}\leq\int_M(|c+|\mathbf{H}|^2|)^{p/2}.
\end{equation}
from \eqref{eq4.1} and \eqref{eq4.2}. Putting \eqref{eq4.3} into \eqref{eq3.3}, we obtain \eqref{eq_thm2}.

Now we check the equality case. When the equality holds in \eqref{eq_thm1},  by checking \eqref{eq3.7} and \eqref{eq3.8},  we must have $|\Phi^A|^2=|\Phi^A|^p$ 
and $\pl \Phi^A=-\lambda_{1,p} |\Phi^A|^{p-2}\Phi^A$ 
for each $A=1,\cdots, N+1$. If $1<p<2$, then $|\Phi^A|=0$ or $1$. But $\sum_{A=1}^{N+1}|\Phi^A|^2=1$,  so there is exactly one $A$ such that $|\Phi^A|=1$ and then $\lambda_{1,p}=0$, which is a contradiction.  Hence $p=2$ and it reduces to the Laplacian case. We can complete the proof by using Theorem \ref{thm_Rei}.

When equality holds in \eqref{eq_thm2}, suppose $p>2$,  then \eqref{eq3-7} and \eqref{eq3-8} must become the equalities, which means
\begin{equation*}
	|\Phi^1|^p=\cdots=|\Phi^{N+1}|^p,
\end{equation*}
and there exists some $A$ such that $|\nabla\Phi^A|=0$. So $\Phi^A$ is constant and then $\lambda_{1,p}=0$, which is a contradiction. 
\end{proof}
%
\begin{bibdiv}
	\begin{biblist}
		
		\bib{And15}{incollection}{
			author={Andrews, Ben},
			title={Moduli of continuity, isoperimetric profiles, and multi-point
				estimates in geometric heat equations},
			date={2015},
			booktitle={Surveys in differential geometry 2014. {R}egularity and evolution
				of nonlinear equations},
			series={Surv. Differ. Geom.},
			volume={19},
			publisher={Int. Press, Somerville, MA},
			pages={1\ndash 47},
			url={https://doi.org/10.4310/SDG.2014.v19.n1.a1},
			review={\MR{3381494}},
		}
		
		\bib{CM17}{article}{
			author={Cavalletti, Fabio},
			author={Mondino, Andrea},
			title={Sharp geometric and functional inequalities in metric measure
				spaces with lower {R}icci curvature bounds},
			date={2017},
			ISSN={1465-3060},
			journal={Geom. Topol.},
			volume={21},
			number={1},
			pages={603\ndash 645},
			url={https://doi.org/10.2140/gt.2017.21.603},
			review={\MR{3608721}},
		}
		
		\bib{Che73a}{book}{
			author={Chen, Bang-yen},
			title={Geometry of submanifolds},
			publisher={Marcel Dekker, Inc., New York},
			date={1973},
			note={Pure and Applied Mathematics, No. 22},
			review={\MR{0353212}},
		}
		
		\bib{Che74}{article}{
			author={Chen, Bang-yen},
			title={Some conformal invariants of submanifolds and their
				applications},
			date={1974},
			journal={Boll. Un. Mat. Ital. (4)},
			volume={10},
			pages={380\ndash 385},
			review={\MR{0370436}},
		}
		
		\bib{CL11}{article}{
			author={Chen, Daguang},
			author={Li, Haizhong},
			title={Second eigenvalue of {P}aneitz operators and mean curvature},
			date={2011},
			ISSN={0010-3616},
			journal={Comm. Math. Phys.},
			volume={305},
			number={3},
			pages={555\ndash 562},
			url={https://doi.org/10.1007/s00220-011-1281-2},
			review={\MR{2819406}},
		}
		
		\bib{Che75}{article}{
			author={Cheng, Shiu~Yuen},
			title={Eigenvalue comparison theorems and its geometric applications},
			date={1975},
			ISSN={0025-5874},
			journal={Math. Z.},
			volume={143},
			number={3},
			pages={289\ndash 297},
			url={https://doi.org/10.1007/BF01214381},
			review={\MR{0378001}},
		}
		
		\bib{Chern1968}{book}{
			author={Chern, S.~S.},
			title={Minimal submanifolds in a {R}iemannian manifold},
			series={University of Kansas, Department of Mathematics Technical Report
				19 (New Series)},
			publisher={Univ. of Kansas, Lawrence, Kan.},
			date={1968},
			review={\MR{0248648}},
		}
		
		\bib{DM15}{article}{
			author={Du, Feng},
			author={Mao, Jing},
			title={Reilly-type inequalities for {$p$}-{L}aplacian on compact
				{R}iemannian manifolds},
			date={2015},
			ISSN={1673-3452},
			journal={Front. Math. China},
			volume={10},
			number={3},
			pages={583\ndash 594},
			url={https://doi.org/10.1007/s11464-015-0422-x},
			review={\MR{3323673}},
		}
		
		\bib{ESI92}{article}{
			author={El~Soufi, Ahmad},
			author={Ilias, Sa{\"{\i}}d},
			title={Une in\'egalit\'e du type ``{R}eilly'' pour les sous-vari\'et\'es
				de l'espace hyperbolique},
			date={1992},
			ISSN={0010-2571},
			journal={Comment. Math. Helv.},
			volume={67},
			number={2},
			pages={167\ndash 181},
			url={https://doi.org/10.1007/BF02566494},
			review={\MR{1161279}},
		}
		
		\bib{ESI00}{article}{
			author={El~Soufi, Ahmad},
			author={Ilias, Sa{\"{\i}}d},
			title={Second eigenvalue of {S}chr\"odinger operators and mean
				curvature},
			date={2000},
			ISSN={0010-3616},
			journal={Comm. Math. Phys.},
			volume={208},
			number={3},
			pages={761\ndash 770},
			url={http://dx.doi.org/10.1007/s002200050009},
			review={\MR{1736334}},
		}
		
		\bib{LY82}{article}{
			author={Li, Peter},
			author={Yau, Shing~Tung},
			title={A new conformal invariant and its applications to the {W}illmore
				conjecture and the first eigenvalue of compact surfaces},
			date={1982},
			ISSN={0020-9910},
			journal={Invent. Math.},
			volume={69},
			number={2},
			pages={269\ndash 291},
			url={http://dx.doi.org/10.1007/BF01399507},
			review={\MR{674407}},
		}
		
		\bib{Mat00}{article}{
			author={Matei, Ana-Maria},
			title={First eigenvalue for the {$p$}-{L}aplace operator},
			date={2000},
			ISSN={0362-546X},
			journal={Nonlinear Anal.},
			volume={39},
			number={8, Ser. A: Theory Methods},
			pages={1051\ndash 1068},
			url={https://doi.org/10.1016/S0362-546X(98)00266-1},
			review={\MR{1735181}},
		}
		
		\bib{Mat13}{article}{
			author={Matei, Ana-Maria},
			title={Conformal bounds for the first eigenvalue of the
				{$p$}-{L}aplacian},
			date={2013},
			ISSN={0362-546X},
			journal={Nonlinear Anal.},
			volume={80},
			pages={88\ndash 95},
			url={https://doi.org/10.1016/j.na.2012.11.026},
			review={\MR{3010757}},
		}
		
		\bib{NV14}{article}{
			author={Naber, Aaron},
			author={Valtorta, Daniele},
			title={Sharp estimates on the first eigenvalue of the {$p$}-{L}aplacian
				with negative {R}icci lower bound},
			date={2014},
			ISSN={0025-5874},
			journal={Math. Z.},
			volume={277},
			number={3-4},
			pages={867\ndash 891},
			url={https://doi.org/10.1007/s00209-014-1282-x},
			review={\MR{3229969}},
		}
		
		\bib{Rei77}{article}{
			author={Reilly, Robert~C.},
			title={On the first eigenvalue of the {L}aplacian for compact
				submanifolds of {E}uclidean space},
			date={1977},
			ISSN={0010-2571},
			journal={Comment. Math. Helv.},
			volume={52},
			number={4},
			pages={525\ndash 533},
			review={\MR{0482597}},
		}
		
		\bib{SW17}{article}{
			author={Seto, Shoo},
			author={Wei, Guofang},
			title={First eigenvalue of the {$p$}-{L}aplacian under integral
				curvature condition},
			date={2017},
			ISSN={0362-546X},
			journal={Nonlinear Anal.},
			volume={163},
			pages={60\ndash 70},
			url={https://doi.org/10.1016/j.na.2017.07.007},
			review={\MR{3695968}},
		}
		
		\bib{Val12}{article}{
			author={Valtorta, Daniele},
			title={Sharp estimate on the first eigenvalue of the {$p$}-{L}aplacian},
			date={2012},
			ISSN={0362-546X},
			journal={Nonlinear Anal.},
			volume={75},
			number={13},
			pages={4974\ndash 4994},
			url={https://doi.org/10.1016/j.na.2012.04.012},
			review={\MR{2927560}},
		}
		
		\bib{Ver91}{incollection}{
			author={Veron, L.},
			title={Some existence and uniqueness results for solution of some
				quasilinear elliptic equations on compact {R}iemannian manifolds},
			date={1991},
			booktitle={Differential equations and its applications ({B}udapest, 1991)},
			series={Colloq. Math. Soc. J\'anos Bolyai},
			volume={62},
			publisher={North-Holland, Amsterdam},
			pages={317\ndash 352},
			review={\MR{1468764}},
		}
		
	\end{biblist}
\end{bibdiv}

\end{document}